\newtheorem{Teo}{Theorem}[section]
\newtheorem{Prop}[Teo]{Proposition}
\newtheorem{Lema}[Teo]{Lemma}
\newtheorem{Cor}[Teo]{Corollary}
\begin{document}

\title{\textsc{Alexander-Conway Polynomial State Model and Link Homology}}

\author{Louis H. Kauffman\\
Mathematics Department\\
University of Illinois at Chicago. USA.\\
{\tt kauffman@uic.edu}\\
and\\
Marithania Silvero \footnote{Partially supported by MTM2010-19355, P09-FQM-5112 and FEDER.}\\
Departamento de Álgebra. \\
Universidad de Sevilla.
Spain.\\
{\tt marithania@us.es}\\
}

\maketitle


\noindent \textbf{Abstract} \,
This paper shows how the Formal Knot Theory state model for the Alexander-Conway polynomial is related to Knot Floer Homology. In particular we prove a parity result about the states in this model that clarifies certain relationships of the model with Knot Floer Homology.

\bigskip

\noindent \textbf{Keywords:} knot, link, state sum, Alexander-Conway polynomial, Knot Floer homology.
\vspace{0.5cm}
\mbox{ }


\section{Introduction}

In 1928 J. W. Alexander introduced in \cite{AlexanderOriginal} a link polynomial with integer coefficients, later known as the Alexander polynomial, as the determinant of a matrix associated with an oriented diagram representing a link. He proves the invariance of this polynomial, up to sign and a power of the variable, under Reidemeister moves. This was the only known link polynomial until 1984, when Jones discovered his polynomial.

\vspace{0.1cm}

Fifty five years after Alexander, Kauffman reformulated the Alexander polynomial as a state summation in \cite{LibroKauffman}. In other words, he gave a formula for the Alexander polynomial that is a sum of evaluations of combinatorial configurations (states) related to the link diagram. Actually he provides a combinatorial definition of the Conway version of the Alexander polynomial by using such states (see Theorem \ref{TeoKauff}). In particular, this version is invariant under Reidemeister moves (exactly, not up to sign or powers of the variable).

\vspace{0.1cm}

Knot Floer Homology is a relatively new link invariant developed by Ozsváth and Szábo \cite{OszvathHeegaardHolomDisksKnots} and Rasmussen \cite{RasmussenThesis}. Since then there have been many authors interested in it, as it provides additional geometric information about a link, such as its genus or its fibredness, and even characterizes the trivial knot. One of the most surprising facts about Knot Floer Homology is that it categorifies the Alexander polynomial. In fact, it categorifies the Alexander-Conway polynomial since that polynomial, for knots, can be identified with a normalized version of the Alexander polynomial (see the references above). This is also proved in the paper \cite{OszvathHegaardKauff} by seeing the chain complex generated by the states of the Formal Knot Theory model for the Alexander-Conway polynomial (but uses contact geometry to find the differentials).

\vspace{0.1cm}

In \cite{OszvathHeegaardAlternating,OszvathHegaardKauff}, the authors identify the graded Euler characteristic of their homology theory as a state summation related to the Formal Knot Theory model. There is a technical difference in that they sum over different local weights and use a different sign (related to a white hole rather than a black hole count). They claim that the resulting state sum is the Alexander-Conway polynomial by reference to the FKT model (Theorem 11.3 in \cite{OszvathHegaardKauff}). We read this and wondered if it were so. It is the purpose of this paper to prove that it is so!

\vspace{0.1cm}

This paper can be thought of as the completion of the proof given in \cite{OszvathHegaardKauff} that Knot Floer Homology categorifies the Alexander-Conway polynomial. 
In fact, this proof is completed in a different way in \cite{OszvathHeegaardHolomDisksKnots} where Ozsváth and Szábo prove that their Euler Characteristic for Knot Floer Homology satisfies the Conway skein
relation and normalizations. And this result has also been verified by different methods \cite{Cimasoni} using the purely combinatorial version of Knot Floer Homology, but that version does not contact the FKT model directly. What is not proved in \cite{OszvathHegaardKauff} is that the state summation proposed in that paper is equal to the 
original FKT state summation. That is what we accomplish in this paper.

\vspace{0.1cm}

At the level of combinatorial knot theory, this paper shows that a reformulation of the FKT model using white hole rather than black hole counts gives the same topological result. In proving this result we obtain an interesting result (Proposition \ref{PropParidad}) relating the parities of the white and black holes for the states in the FKT model. This result is of independent interest.

\vspace{0.1cm}

In the original FKT model, the formula for the Alexander-Conway polynomial has the form $$\nabla_L(t) = \displaystyle\sum_{S\in\mathcal{S}} (-1)^{\sharp B_S} <L|S>.$$ Here $<L|S>$ denotes a product of local vertex weights and $\sharp B_S$ denotes the number of black holes in the state $S$ (see Figure \ref{mark}). Letting $\mu(L)$ denote the number of components of the link $L,$
we show that $\sharp B_S + \sharp W_S \equiv \mu(L) + 1, \, \mbox{mod 2}$ (see Proposition \ref{PropParidad}). From this it follows that $$\nabla_L(t) = \displaystyle\sum_{S\in\mathcal{S}} (-1)^{\sharp W_S} [L|S],$$ where the weights are changed to their inverses. This is the formula that Ozsváth and Szabó used in their categorification. Our parity result confirms that Knot Floer Homology categorifies the Alexander-Conway polynomial.

\section{Parity of white and black holes}

Given an oriented link $L \in S^3$, fix an associated planar diagram $P_L$, and write $v_1, \ldots, v_n$ for its double points. We assume that the diagram $P_L$ is connected, as the Conway-Alexander polynomial of a disconnected diagram is equal to zero. Then $P_L$ divides the plane into $n+2$ regions, one of them unbounded. Choose two adjacent regions and mark each of them with a star. A state $S$ associated to $P_L$ is an assignment of markers to each $v_i$ (that is, a choice of an adjacent unstarred region for each double point) so that no region of $P_L$ contains more than one marker. Let $\mathcal{S}$ denote the set of all possible states in $P_L$. In \cite{LibroKauffman} Kauffman reformulates the Alexander Polynomial as a state summation in the following way:

\vspace{0.1cm}

In Figure \ref{mark} we indicate two types of labeling for the crossings of an oriented link diagram. The labeling shown in the first row will be called
the {\it Alexander-Conway labeling.} It is a labeling that gives rise to a normalized version of the Alexander polynomial and gives a state summation for the Alexander-Conway polynomial in $z = t^{1/2} - t^{-1/2}$, as will be shown in Theorem \ref{TeoKauff}. We call these Alexander-Conway labels because they are different from labels that derive directly from Alexander's original paper on his polynomial \cite{AlexanderOriginal}. The reader may be interested in the way in which this is formulated in  \cite{LibroKauffman} (Formal Knot Theory by Kauffman).  For this we refer the reader to the Figure on page 7 where the labels are $B$ and $W$. There it is understood that the unlabeled regions at the crossing are labeled 1 and that for the case of the Alexander-Conway polynomial, we take B = $t^{-1/2}$ and 
$W = t^{1/2}.$ If the reader examines the Dover reprint of the book, these conventions are explained in more detail in the article ``Remarks on Formal Knot Theory" that is included with the book.

\vspace{0.1cm}

Given a state $S$, let $<L|S>$ denote the product of the Alexander-Conway labels associated to each marker in $S$ (see Figure \ref{mark}).  A marker is called a black hole if it occurs between two ingoing lines to the corresponding crossing.  A marker is called a white hole if it occurs between outgoing lines at a crossing. Write $B_S$ ($W_S$) for the set of markers at crossings $v_i$ in $S$ that occupy  black (white) holes.

\begin{figure}
\centering
\includegraphics[width = 11cm]{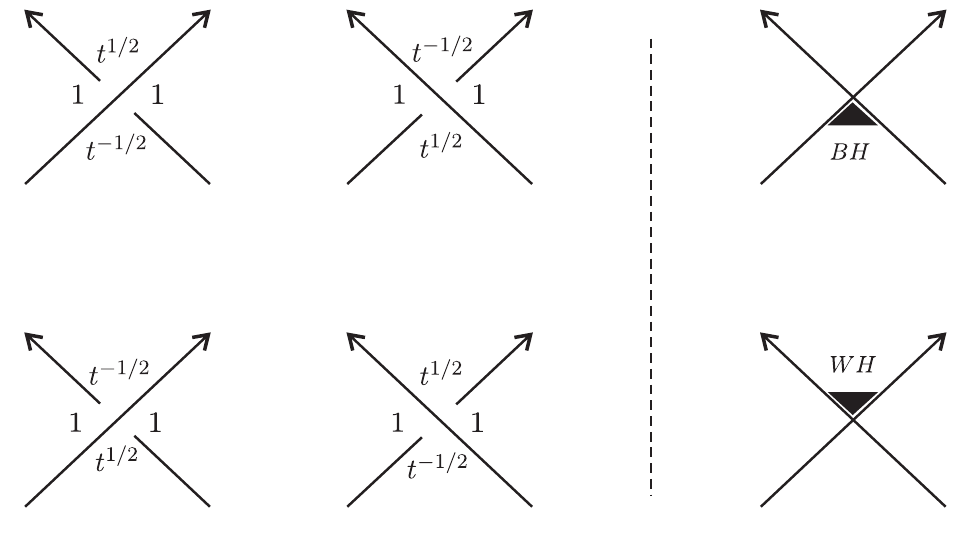}
\caption{\small{The left part of the first (second) row shows the Alexander-Conway labeling defining $<\cdot \, | \, \cdot>$ ($[\cdot \, | \, \cdot]$); the rightmost part shows the state marker producing a black (white) hole.}}
\label{mark}
\end{figure}

\vspace{0.1cm}

\begin{Teo}{\rm{\cite[Theorem 4.3]{LibroKauffman}}}\label{TeoKauff}
Following the notation above, write
$$
\nabla_L(t) = \sum_{S\in\mathcal{S}} (-1)^{\sharp B_S} <L|S>.
$$

Then, $\nabla_L(t)$ satisfies the following conditions: \\
1.- $\nabla_L(t) \stackrel{\cdot}{=} \Delta_L(t)$, with $\Delta_L(t)$ being the original version of the Alexander polynomial given in \cite{AlexanderOriginal} and $\stackrel{\cdot}{=}$ meaning equality up to sign and powers of $t$. \\
2.- If $L$ and $L'$ are equivalent oriented links, then $\nabla_L(t) = \nabla_{L'}(t)$. \\
3.- $\nabla_\bigcirc(t) = 1$.\\
4.- $\nabla_{L_+}(t) - \nabla_{L_-}(t) = (t^{1/2} - t^{-1/2}) \cdot \nabla_{L_0}(t)$, where $L_+, L_-$ and $L_0$ are identical oriented links differing only in a small neighborhood as shown in Figure \ref{D+-0}.\\
Hence, $\nabla_L(t)$ is the Alexander-Conway polynomial of the link $L$, that is, the Alexander polynomial normalized and such that $\nabla_\bigcirc(t) = 1$.
\end{Teo}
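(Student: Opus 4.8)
The strategy is to reduce the four assertions to two inputs: a determinant identity tying the state sum to Alexander's original matrix, and a purely local computation at a single crossing, both resting on the fact that the state sum does not depend on the chosen pair of starred regions.

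\emph{Step 1 (well-definedness and comparison with $\Delta_L$).} Form Alexander's matrix $M(P_L)$, with one row per crossing $v_i$ and one column per region of $P_L$, whose $(v_i,R)$ entry is the Alexander--Conway label of the corner of $R$ at $v_i$ (zero if $R$ is not incident to $v_i$, and otherwise one of $\pm t^{1/2},\pm t^{-1/2}$ as dictated by Figure \ref{mark}). Deleting the two starred columns leaves an $n\times n$ matrix $M'$. In the Leibniz expansion of $\det M'$ the nonzero terms correspond bijectively to states: a state chooses, for every crossing, a distinct incident unstarred region, i.e. a permutation $\sigma_S$, and the associated monomial is $\pm\langle L|S\rangle$. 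The point is that $\mathrm{sgn}(\sigma_S)$ times the entry signs equals $(-1)^{\sharp B_S}$ up to one global sign depending only on $P_L$; this, together with the independence of $\nabla_L$ on the choice of the two adjacent starred regions, is exactly the content of the Clock Theorem of \cite{LibroKauffman}, which presents $\mathcal S$ as a connected lattice under local ``clock''/``counterclock'' moves, each preserving the relevant signed weight. Hence $\nabla_L(t)$ is a well-defined Laurent polynomial and $\nabla_L \stackrel{\cdot}{=}\det M' \stackrel{\cdot}{=}\Delta_L$, which is assertion~1.

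\emph{Step 2 (normalization and skein relation).} Assertion~3 is immediate: the crossingless diagram of the unknot has a single (empty) state of weight $1$. For assertion~4, use Step~1 to place both stars outside the small disk in which $L_+,L_-,L_0$ differ, so that all four corners at the distinguished crossing $v$ are available. In any state the marker at $v$ sits in one of these corners; inspecting the two corner labelings of Figure \ref{mark} (recall $W=t^{1/2}$, $B=t^{-1/2}$) one sees that switching $v$ from $L_+$ to $L_-$ moves the labels among the corners while fixing the set of regions, and that the oriented smoothing $L_0$ merges the appropriate pair of these regions. Matching the states of $L_+$, $L_-$ and $L_0$ according to the corner occupied at $v$, the contributions in which the marker lies in a region surviving into $L_0$ cancel in the difference $\nabla_{L_+}-\nabla_{L_-}$, while the two remaining contributions, carrying weights $t^{1/2}$ and $-t^{-1/2}$ (the minus produced by one extra black hole), assemble into $(t^{1/2}-t^{-1/2})\,\nabla_{L_0}(t)$.

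\emph{Step 3 (invariance and conclusion).} Assertion~2 is obtained by verifying the three Reidemeister moves on the state sum, using the choice-independence from Step~1 to normalize the stars near the move. The only delicate move is Reidemeister~I: the region enclosed by the kink is incident to exactly one crossing, so expanding $\det M'$ along the column of that region factors off precisely the label of the unique corner it contributes, and the Alexander--Conway convention---as opposed to Alexander's original labels---is exactly what makes this factor equal to $1$, so $\nabla$ is unchanged; Reidemeister~II and~III then follow by similar, easier local cancellations. Finally, for the ``hence'': properties~3 and~4, valid for the link invariant $\nabla$, determine it uniquely, since any link is turned into an unlink by finitely many crossing changes and oriented smoothings and $\nabla$ of a split (in particular disconnected) diagram is $0$; therefore $\nabla_L$ is, by definition, the Alexander--Conway polynomial. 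I expect the main obstacle to be Step~1: the combinatorial matching of $(-1)^{\sharp B_S}$ with permutation signs and the proof that the starred pair may be moved freely---i.e. the Clock Theorem itself; the skein and Reidemeister computations of Steps~2--3 are then finite local checks.
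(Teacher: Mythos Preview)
The paper does not give its own proof of this theorem: it is quoted verbatim as \cite[Theorem~4.3]{LibroKauffman} and used as a black box, so there is no in-paper argument to compare against. What you have sketched is, in outline, the proof as it appears in Kauffman's \emph{Formal Knot Theory}: interpret the state sum as the expansion of a minor of the Alexander matrix, invoke the Clock Theorem to get independence of the starred pair and to match $(-1)^{\sharp B_S}$ with the permutation signs, then do the local crossing computation for the skein relation and the Reidemeister checks.

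Your sketch is correct in its architecture and honestly flags the real work (the Clock Theorem in Step~1). Two small cautions. First, in Step~2, saying that ``the contributions in which the marker lies in a region surviving into $L_0$ cancel'' is slightly glib: at the distinguished crossing the north and south labels are $1$ in both $L_+$ and $L_-$, but the east/west labels swap between $t^{\pm 1/2}$ and $-t^{\pm 1/2}$, and one must also track that the black-hole count changes parity for exactly the right markers; the bookkeeping works, but it is where errors creep in. Second, in Step~3 your Reidemeister~I argument assumes the kink region is unstarred; you should first use Step~1 to move the stars away from the kink, and then note that the unique marker forced into the kink region sits in a corner with label~$1$ (this is precisely the normalization that distinguishes the Alexander--Conway labels from Alexander's original ones).
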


\begin{figure}[H] \label{D+-0}
\centering
\includegraphics[width = 5cm]{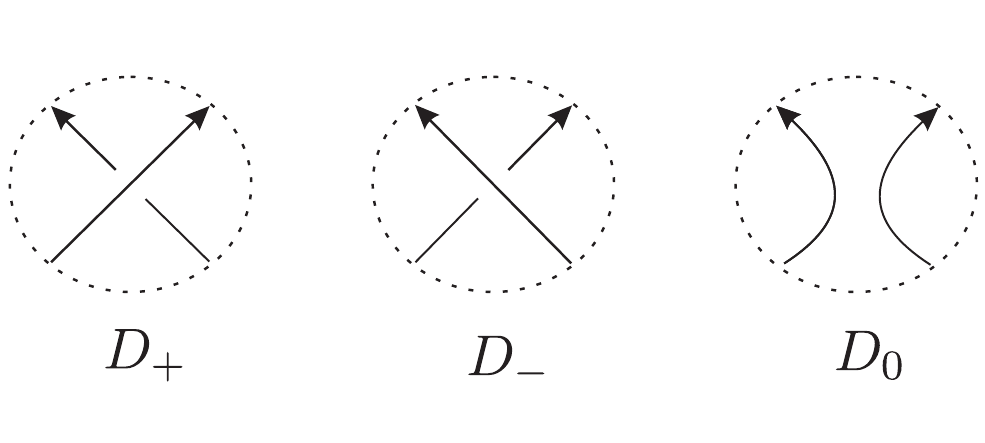}
\caption{}
\end{figure}

\vspace{0.1cm}

Note that it follows from the skein relation given here that $\nabla_L(t)$ is a polynomial in $z = t^{\frac{1}{2}} - t^{-\frac{1}{2}}$.

\vspace{0.1cm}

In \cite{OszvathHegaardKauff} an alternative definition of the Alexander polynomial is given in the following terms:

$$
\Theta_L(t) = \sum_{S\in\mathcal{S}} (-1)^{\sharp W_S} [L|S] \stackrel{\cdot}{=} \Delta_L(t),
$$
where the square bracket is defined in terms of the labels in the second row of Figure \ref{mark}. This definition is a state sum using white holes and labels that are essentially the inverses of the labels in the Theorem above.
\vspace{0.1cm}

We will show that this alternative definition of the Alexander polynomial is, in fact, equal to the Alexander-Conway polynomial as expressed in the original Formal Knot Theory state sum.

\begin{Teo}\label{ourTheorem}
The version given by Ozsváth and Szabó in \cite{OszvathHegaardKauff} of the Alexander polynomial and the Formal Knot Theory state sum given in \cite{LibroKauffman} are equal; that is
$$
\Theta_L(t) = \sum_{S\in\mathcal{S}} (-1)^{\sharp W_S} [L|S] \, = \, \sum_{S\in\mathcal{S}} (-1)^{\sharp B_S} <L|S> \, =  \nabla_L(t).
$$

This shows that the Knot Floer Homology of Ozsváth and Szabó categorifies the Alexander-Conway polynomial via the Formal Knot Theory state sum.
\end{Teo}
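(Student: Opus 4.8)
The plan is to derive Theorem \ref{ourTheorem} from Proposition \ref{PropParidad} together with the classical symmetry of the Conway polynomial under $t\mapsto t^{-1}$, after first identifying the two bracket terms of Figure \ref{mark} as mutually inverse Laurent monomials. Concretely, a state $S$ places exactly one marker at each of the $n$ crossings, and every Alexander--Conway label appearing in $<\cdot\,|\,\cdot>$ is one of $1,\,t^{1/2},\,t^{-1/2}$; none of them carries a sign, all signs in the state sum being absorbed into the prefactor $(-1)^{\sharp B_S}$. Hence $<L|S>$ is, for each $S$, a Laurent monomial in $t^{1/2}$ with coefficient $1$. The labeling defining $[\cdot\,|\,\cdot]$ is obtained by replacing each weight by its inverse (and $1^{-1}=1$), so $[L|S]=<L|S>^{-1}$; and because $<L|S>$ is a monomial with unit coefficient, inverting it coincides with the substitution $t\mapsto t^{-1}$. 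Writing $g(t^{-1})$ for the result of that substitution in a Laurent polynomial $g$, we obtain the key state‑by‑state identity $[L|S](t)=<L|S>(t^{-1})$.

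Next I would bring in Proposition \ref{PropParidad}: since $\sharp B_S+\sharp W_S\equiv\mu(L)+1\pmod{2}$ for every state, the sign $(-1)^{\sharp W_S}$ equals $(-1)^{\mu(L)+1}(-1)^{\sharp B_S}$, which is independent of $S$ and therefore factors out of the sum. Combined with the identity above,
$$
\Theta_L(t) \;=\; \sum_{S\in\mathcal{S}}(-1)^{\sharp W_S}\,[L|S](t) \;=\; (-1)^{\mu(L)+1}\sum_{S\in\mathcal{S}}(-1)^{\sharp B_S}\,<L|S>(t^{-1}) \;=\; (-1)^{\mu(L)+1}\,\nabla_L(t^{-1}).
$$
Finally I would apply the symmetry $\nabla_L(t^{-1})=(-1)^{\mu(L)-1}\nabla_L(t)$, a standard consequence of the properties collected in Theorem \ref{TeoKauff}: written in the variable $z=t^{1/2}-t^{-1/2}$, an induction on the number of crossings via the skein triple $L_+,L_-,L_0$ (using that $\mu(L_0)$ and $\mu(L_\pm)$ differ by $1$ while $\nabla_{L_0}$ enters multiplied by $z$, with base case $\nabla_\bigcirc=1$) shows that every term of $\nabla_L$ has $z$‑degree $\equiv\mu(L)-1\pmod 2$, and $t\mapsto t^{-1}$ sends $z$ to $-z$. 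Substituting, $\Theta_L(t)=(-1)^{\mu(L)+1}(-1)^{\mu(L)-1}\nabla_L(t)=\nabla_L(t)$, since $(\mu(L)+1)+(\mu(L)-1)$ is even; the categorification assertion is then immediate from part~1 of Theorem \ref{TeoKauff}.

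The substantive input here is Proposition \ref{PropParidad}, which I am taking as given; granting it, the argument is essentially bookkeeping. The one place I expect to require genuine care is the state‑by‑state identity $[L|S](t)=<L|S>(t^{-1})$: establishing it forces one to match the conventions of Figure \ref{mark} and of \cite{LibroKauffman,OszvathHegaardKauff} precisely and, in particular, to confirm that no stray signs hide among the labels — notably on the two ``mixed'' regions at each crossing, which must be labeled $1$ in both rows. Once that is pinned down, the only remaining thing to notice is that the parity supplied by Proposition \ref{PropParidad} is exactly the one that cancels the parity produced by the Conway symmetry.
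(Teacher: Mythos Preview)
Your argument is correct and follows essentially the same route as the paper: both proofs combine Proposition~\ref{PropParidad} with the parity of $z$-degrees in $\nabla_L$ to cancel the sign discrepancy, after identifying $[L|S]$ with a transformed $<L|S>$. The only cosmetic difference is that the paper phrases this identification via the mirror image~$\overline{L}$ (noting $<\overline{L}\,|\,S>=[L\,|\,S]$ and $\nabla_L=(-1)^{\mu(L)+1}\nabla_{\overline{L}}$), whereas you phrase it via the substitution $t\mapsto t^{-1}$; since mirroring sends $z\mapsto -z$, i.e.\ $t\mapsto t^{-1}$, these are the same computation.
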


\noindent The proof relies on the following Proposition:

\begin{Prop} \label{PropParidad}
Let $L$ be an oriented link with $\mu(L)$ components and $S$ any FKT state in $L$. Then, the parities of the number of black and white holes in $S$ are equal when $\mu(L)$ is odd, and opposite when $\mu(L)$ is even.
\end{Prop}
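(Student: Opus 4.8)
The plan is to compute the difference $\sharp B_S - \sharp W_S$ modulo $2$ as a sum of local contributions over the crossings and then show this global sum equals $\mu(L)+1$ mod $2$ by a counting argument on the link components. First I would recall that at each crossing $v_i$ the four adjacent regions are divided by the orientations into a pair of ``black'' corners (between two ingoing strands) and a pair of ``white'' corners (between two outgoing strands), with the remaining two corners being neutral. A state $S$ picks one marker per crossing, so each crossing contributes either $+1$ to $\sharp B_S$, $+1$ to $\sharp W_S$, or $0$ to both. Hence $\sharp B_S + \sharp W_S$ counts exactly the crossings at which the chosen marker is \emph{not} a neutral corner. So the first genuine step is to reinterpret $\sharp B_S + \sharp W_S \pmod 2$ as $n$ minus the number of crossings whose state marker sits in a neutral corner, i.e.\ it is controlled by the parity of the number of ``neutral'' markers.

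The core of the argument is then combinatorial: I would use the bijection, fundamental to the FKT model, between states $S$ of a connected diagram $P_L$ with two adjacent starred regions and spanning trees (or, via the Clock Theorem machinery, maximal forests / permutation data) of an associated graph, and track how the black/white/neutral type of each marker distributes. A cleaner route, and the one I expect to carry through, is to pass to a single fixed reference state $S_0$ and prove the statement for $S_0$ directly, then show that any two states are connected by elementary ``clock moves'' (transpositions of two markers between two crossings sharing a face), and that each such move changes $\sharp B_S$ and $\sharp W_S$ by the same parity — so the quantity $\sharp B_S + \sharp W_S \bmod 2$ is a state-independent invariant of $P_L$. For the reference state one can choose markers coherently around each Seifert circle / component, where the alternation of in- and out-corners along a component makes the black-versus-white bookkeeping transparent, yielding the $\mu(L)+1$ count: roughly, each component contributes an odd number of ``sign changes'' in the labelling, and the global constraint that the markers avoid the two starred regions contributes the extra $+1$.

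The main obstacle I anticipate is the clock-move invariance step: showing that a transposition of two adjacent markers preserves $\sharp B_S + \sharp W_S$ modulo $2$ requires a careful local case analysis of how the two crossings involved, together with the shared face and the strand orientations through it, can be configured — there are several orientation patterns to enumerate, and one must check that in every case the net change in (number of non-neutral markers) is even. The secondary difficulty is making the ``reference state'' count precise: identifying exactly which component-coherent choices are legal (they must still avoid the starred regions and use each region at most once), and confirming that the parity contribution of a single component is indeed independent of its embedding and of the other components. Once these two points are settled, the Proposition follows by combining the local parity count with the invariance, and Theorem~\ref{ourTheorem} then drops out by substituting $\sharp W_S \equiv \sharp B_S + \mu(L)+1 \pmod 2$ into $\nabla_L(t)$ and observing that $[L|S]$ is obtained from $<L|S>$ precisely by inverting each label, which exactly compensates the change of sign convention up to the global factor $(-1)^{\mu(L)+1}$ — and that global factor is absorbed by the standard normalization $\stackrel{\cdot}{=}$, or vanishes outright because $\mu(L)$ here is forced to make the two sums literally agree.
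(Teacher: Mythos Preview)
Your skeleton --- reduce to a single reference state via the Clock Theorem, then compute for that state --- is exactly the paper's strategy. Where you diverge is in the reference-state computation and in the bridge to $\mu(L)$, and that is where your proposal is thin.

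The paper does not attempt to read off the parity from ``sign changes along each component.'' Instead it introduces the quantity $e(D)=c(D)-s(D)+1$ (crossings minus Seifert circles plus one) and shows two things: (i) by building a Jordan trail from the Seifert circles and reading off the associated state, one obtains a state $S_0$ with $\sharp B_{S_0}+\sharp W_{S_0}=c(D)-s(D)+1$ \emph{exactly}, because markers coming from the $s(D)-1$ reassemblings are neutral while every remaining marker (coming from an oriented smoothing) is a black or white hole; and (ii) the parity of $c(D)-s(D)+1$ is invariant under flat Reidemeister moves, hence equals $1-\mu(L)$ mod~$2$ by comparing with the trivial diagram. The clock-move step is then used to propagate the \emph{exact} equality $\sharp B_S+\sharp W_S=c(D)-s(D)+1$ to all states, not merely its parity; this makes the case analysis you anticipate much shorter, since a clock move visibly swaps one neutral marker with one black-or-white marker at each of the two crossings involved.

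Your alternative route --- counting directly along components and invoking the two stars for the ``$+1$'' --- is not obviously wrong, but as written it is a heuristic rather than an argument: you have not said what a ``component-coherent'' reference state is, why one exists compatibly with the starred regions, or why each component contributes odd parity. The Seifert-circle/Jordan-trail construction is precisely what makes that step go through without pain, and the flat-Reidemeister lemma is what replaces your vague component count by a clean reduction to the unlink. Finally, your last paragraph about Theorem~\ref{ourTheorem} slightly misreads the situation: the paper gets literal equality (not merely $\stackrel{\cdot}{=}$) by passing through the mirror image $\overline{L}$ and the identity $\nabla_L(z)=(-1)^{\mu(L)+1}\nabla_{\overline{L}}(z)$, which absorbs the global sign exactly.
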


\vspace{0.2cm}

\noindent {\bf Definition.} A {\it flat Reidemeister move} is a move on a planar diagram with crossings but no specification of over or under crossings. Such diagrams are called {\it universes} in  \cite{LibroKauffman}.
Flat Reidemeister moves are diagrammatically the same as the usual Reidemeister moves, but have no constraint about patterns of over and under crossings.\\

\begin{proof}[Proof of Proposition \ref{PropParidad}:]
\begin{Lema}\label{LemaPlanarReid}
For this lemma the Seifert surface specifically denotes the surface constructed from a given link diagram by using Seifert's algorithm.
Let $D$ be  a planar diagram representing an oriented link $L$; write $c(D)$ and $s(D)$ for the number of crossings and circles in the Seifert surface associated to $D$. The parity of $e(D) = c(D) - s(D) + 1$ is invariant under flat Reidemeister moves. Hence, the parity is a link invariant which will be called $E(L)$.
\end{Lema}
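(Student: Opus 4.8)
The plan is to show that the parity of $e(D)$ is completely determined by $\mu(L)$, via the Euler characteristic of the Seifert surface; invariance under flat Reidemeister moves then follows at once. Write $F_D$ for the Seifert surface of $D$. Seifert's algorithm first smooths all $c(D)$ crossings coherently with the orientation, producing $s(D)$ disjoint Seifert circles, each capped by a disk, and then reinserts the crossings as half-twisted bands joining these disks. Thus $F_D$ is a compact oriented surface with $\partial F_D = L$, assembled from $s(D)$ disks by attaching $c(D)$ one-handles; since attaching a one-handle lowers the Euler characteristic by $1$,
\[
\chi(F_D) = s(D) - c(D).
\]

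Next I would decompose $F_D$ into connected components and invoke the classification of compact orientable surfaces. If $F_D$ has $k$ components, total genus $g$, and (since $\partial F_D = L$) exactly $\mu(L)$ boundary circles in all, then $\chi(F_D) = 2k - 2g - \mu(L)$. Comparing with the identity above yields $c(D) - s(D) = 2g - 2k + \mu(L)$, hence
\[
e(D) = c(D) - s(D) + 1 \equiv \mu(L) + 1 \pmod 2 .
\]
(For a connected diagram $F_D$ is connected, so $k = 1$ and even $e(D) = 2g + \mu(L) - 1$; but only the parity matters, and $k$ and $g$ enter only through $2k$ and $2g$. This is what makes the argument robust: neither the genus nor the number of components of $F_D$ is a flat move invariant, yet the parity of $e(D)$ still is.)

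It remains to observe that $\mu(L)$ is unchanged by a flat Reidemeister move. Indeed $\mu(L)$ is the number of loops obtained by traversing the universe of $D$ and going straight through at each crossing, and this does not depend on over/under information: a flat move of type~I inserts or removes a self-crossing kink on a single strand, a flat move of type~II inserts or removes a pair of crossings without altering how the arcs are joined up, and a flat move of type~III only slides a strand past a crossing. Hence the parity of $e(D)$ is a flat Reidemeister invariant; since ordinary Reidemeister moves are in particular flat moves, it is a link invariant, namely $E(L) \equiv \mu(L) + 1 \pmod 2$.

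One could instead verify the lemma by a direct bookkeeping of how each flat move affects $c(D)$ and $s(D)$: moves of types~I and III are immediate (a kink adds one crossing together with one small Seifert circle, and a type~III move changes neither count), while a type~II move changes $c(D)$ by $\pm2$ and $s(D)$ by an even amount, the value depending on whether the two strands bounding the bigon are coherently or incoherently oriented. Sorting out that type~II count --- especially the incoherent case, where the affected Seifert circle is simultaneously merged or split and accompanied by an extra small circle --- is the only genuinely fiddly point, and the Euler characteristic computation is attractive precisely because it sidesteps it.
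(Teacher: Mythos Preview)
Your argument is correct and takes a genuinely different route from the paper. The paper proves the lemma by a direct case-by-case check displayed in a figure: for each type of flat Reidemeister move and each possible orientation and Seifert-circle configuration, one tallies the change in $c(D)$ and in $s(D)$ and verifies that $c(D)-s(D)$ changes by an even amount. Your approach instead computes $\chi(F_D)=s(D)-c(D)$, invokes the classification of compact orientable surfaces to get $e(D)\equiv \mu(L)+1\pmod 2$, and then notes that $\mu(L)$ is read off from the universe and is unchanged by flat moves. This is more conceptual and in fact strictly stronger: it simultaneously yields the Corollary that immediately follows the lemma in the paper (that $E(L)$ has parity opposite to $\mu(L)$), which the paper obtains separately by reducing to the trivial link via flat moves. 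The paper does eventually record the identity $c(D)-s(D)+1 = 2g + \mu(L) - 1$ for connected diagrams, but only after both the lemma and the corollary have been established by other means. Your closing paragraph accurately describes the paper's own approach, including the observation that the type~II case is where the bookkeeping needs care.
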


\begin{proof}
The proof is given in Figure~\ref{reidproof}.
\end{proof}

\begin{figure}
\centering
\includegraphics[width = 14cm]{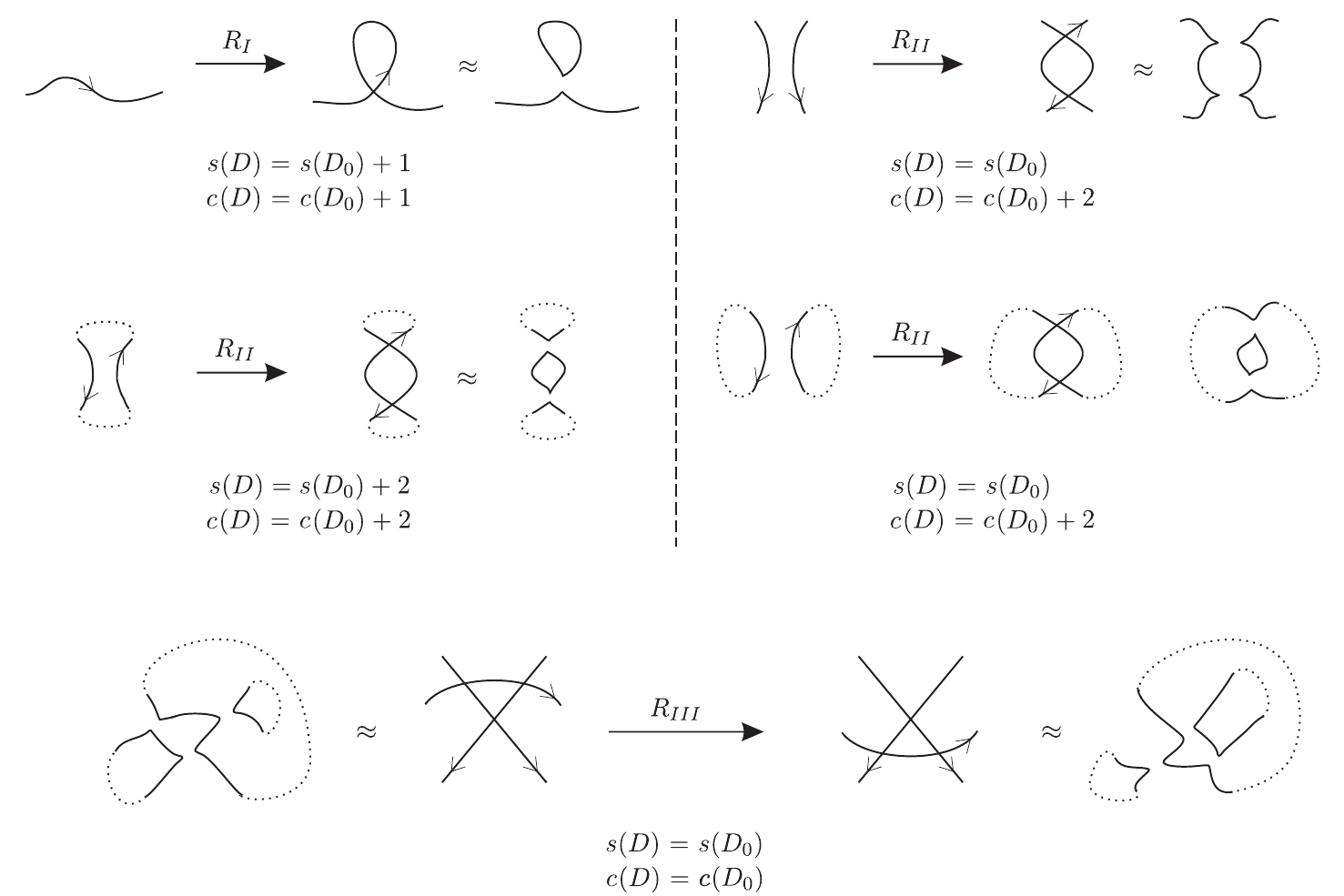}
\caption{\small{The proof of Lemma \ref{LemaPlanarReid} for first and second Reidemeister moves with all possible orientations and Seifert circles configurations are shown in this picture. For the third Reimeister move the many different possibilities can be checked as in the example.}}
\label{reidproof}
\end{figure}

\begin{Cor}
Let $L$ be an oriented link with $\mu(L)$ components. Then the parity of $\mu(L)$ is opposite to $E(L)$.
\end{Cor}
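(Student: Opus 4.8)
The plan is to realize the quantity $e(D)$ directly in terms of the Euler characteristic of the Seifert surface and then read off its parity from the classification of compact surfaces. Concretely, let $F$ be the Seifert surface obtained from $D$ by Seifert's algorithm: smooth every crossing coherently, cap the resulting $s(D)$ Seifert circles with disjoint disks pushed to distinct heights, and reconnect the disks by a twisted band at each of the $c(D)$ crossings. This surface carries an obvious handle decomposition with $s(D)$ $0$-handles and $c(D)$ $1$-handles, so
$$
\chi(F) = s(D) - c(D).
$$
Moreover $F$ is orientable (the disks inherit the plane's orientation and the bands are attached compatibly because $L$ is oriented), its boundary is exactly $L$ and hence has $\mu(L)$ components, and --- crucially --- $F$ is connected because the diagram $D$ is assumed connected: $F$ deformation retracts onto the graph with one vertex per Seifert circle and one edge per crossing, which is connected exactly when the $4$-valent graph underlying $D$ is.

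First I would invoke the classification of connected compact orientable surfaces with boundary: such a surface of genus $g$ with $b$ boundary components has $\chi = 2 - 2g - b$. Applying this to $F$ gives $\chi(F) = 2 - 2g(F) - \mu(L)$, hence $\chi(F) \equiv \mu(L) \pmod 2$. Combining this with $\chi(F) = s(D) - c(D)$ yields
$$
e(D) = c(D) - s(D) + 1 = 1 - \chi(F) \equiv 1 + \mu(L) \pmod 2 .
$$
By Lemma~\ref{LemaPlanarReid}, $E(L)$ is by definition the common parity of $e(D)$ over all diagrams of $L$, so $E(L) \equiv \mu(L) + 1 \pmod 2$; that is, $E(L)$ and $\mu(L)$ have opposite parity, which is the assertion of the Corollary. (In fact the displayed computation already shows $e(D) \not\equiv \mu(L) \pmod 2$ for every individual diagram $D$, so the Corollary does not strictly need the invariance statement of the Lemma, only the construction of the Seifert surface that it records.)

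There is essentially no hard step here; the only points requiring care are bookkeeping ones: that the handle count gives $\chi(F) = s(D) - c(D)$ with the correct sign, that $\partial F$ is counted as having exactly $\mu(L)$ components, and --- the one genuine hypothesis --- that $F$ is connected, which is precisely why the standing assumption that the diagram is connected is used. If one wished to drop connectivity, the formula would have to be corrected by the number of connected components of $D$; but since the Formal Knot Theory setup, and hence Proposition~\ref{PropParidad}, concerns only connected diagrams, this refinement is not needed.
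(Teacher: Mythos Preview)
Your proof is correct, but it takes a genuinely different route from the paper's. The paper argues as follows: by Lemma~\ref{LemaPlanarReid}, the parity of $e(D)$ is unchanged by flat Reidemeister moves; since any universe can be reduced by such moves to the standard crossingless diagram $U_{\mu(L)}$ of the $\mu(L)$-component unlink, one simply computes $e(U_{\mu(L)}) = 0 - \mu(L) + 1 = 1 - \mu(L)$ and reads off the parity there.

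Your argument instead identifies $e(D) = 1 - \chi(F)$ for the Seifert surface $F$ and invokes the classification of compact orientable surfaces with boundary to get $\chi(F) = 2 - 2g(F) - \mu(L)$, hence $e(D) = 2g(F) + \mu(L) - 1$ directly for \emph{every} diagram. This is in fact the sharper identity the paper records later, after the proof of Proposition~\ref{PropParidad}, and as you observe it makes the appeal to Lemma~\ref{LemaPlanarReid} unnecessary for the Corollary itself. The trade-off is that the paper's route is purely combinatorial (just the flat-move case check of Figure~\ref{reidproof}), whereas yours imports the topological input of surface classification; on the other hand, your approach avoids the somewhat lengthy case analysis behind Lemma~\ref{LemaPlanarReid} and yields the exact genus formula rather than only a parity.
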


\begin{proof}
By a finite sequence of flat Reidemeister moves, any planar diagram representing $L$ can be transformed into the trivial link with $\mu(L)$ components, which satisfies $e(U_{\mu(L)}) = 1 - \mu(L)$.
\end{proof}

\vspace{0.2cm}

\begin{Lema}
Let $D$ be a planar diagram of an oriented link $L$. There always exists an FKT state $S$ verifying $\sharp B_S + \sharp W_S = c(D) - s(D) + 1$, where $B_S$ and $W_S$ are the number of black and white holes in $S$.
\end{Lema}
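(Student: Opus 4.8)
\noindent The plan is to prove a bit more, namely that \emph{every} FKT state $S$ satisfies $\sharp B_S+\sharp W_S=c(D)-s(D)+1$, by analysing the checkerboard colouring of the regions of $D$. The starting point is the local picture at a crossing: reading the four edge-ends cyclically, the incoming/outgoing pattern is either $(\mathrm{in},\mathrm{in},\mathrm{out},\mathrm{out})$ or $(\mathrm{in},\mathrm{out},\mathrm{out},\mathrm{in})$, so there is always exactly one corner lying between two ingoing edges (the only possible black hole) and exactly one between two outgoing edges (the only possible white hole), and these two corners are \emph{opposite}. Hence in a checkerboard colouring of the regions these two corners receive one colour and the two ``mixed'' corners the other; since a region carries a single colour, the regions split into \emph{hole regions} and \emph{mixed regions}, and a marker is a black or white hole exactly when it lies in a hole region. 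As a state assigns the $c(D)$ markers bijectively to the $c(D)$ non-starred regions, and exactly one starred region is a hole region (the two starred regions are adjacent, hence oppositely coloured), $\sharp B_S+\sharp W_S$ always equals $h-1$, where $h$ is the number of hole regions.

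Next I would compute $h$. Let $H_{\mathrm{hole}}$ be the plane graph whose vertices are the hole regions, with one edge per crossing joining its two hole corners; its faces are then the mixed regions, and its planar dual $H_{\mathrm{mix}}$ is built symmetrically from the mixed regions ($D$ is the medial graph of each), so $H_{\mathrm{hole}}$ is connected because $D$ is. Running Seifert's algorithm, the oriented smoothing at a crossing merges precisely its two hole corners and leaves the two mixed corners untouched; hence the regions complementary to the resulting $s(D)$ disjoint Seifert circles are the (untouched) mixed regions together with the single region into which all hole regions collapse, connectedness of $H_{\mathrm{hole}}$ ensuring the collapse is to one region. Since $s(D)$ disjoint circles cut the plane into $s(D)+1$ pieces, this gives $s(D)+1=m+1$: the number of mixed regions equals $s(D)$, so $h=c(D)-s(D)+2$.

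To make this effective I would then exhibit an explicit state: pick a spanning tree $T$ of $H_{\mathrm{hole}}$; by the usual tree/cotree duality for plane graphs, the complementary set of crossings is a spanning tree of $H_{\mathrm{mix}}$; root the former at the starred hole region and the latter at the starred mixed region, and send each crossing to the child endpoint of its edge. This produces an injective, adjacency-respecting placement of all $c(D)$ markers, i.e.\ a state $S$, in which exactly the $|T|=h-1$ crossings of $T$ lie in hole regions; therefore $\sharp B_S+\sharp W_S=h-1=c(D)-s(D)+1$, as required.

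I expect the only genuine work to sit in the second paragraph: verifying carefully that the oriented smoothing merges exactly the black/white hole corners at each crossing, and that this forces the number of mixed regions to equal $s(D)$ --- in particular that all hole regions collapse to a single one, which is where connectedness of $H_{\mathrm{hole}}$ is used. The colour dichotomy of the first paragraph and the spanning-tree construction of the third are then routine bookkeeping.
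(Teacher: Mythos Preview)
Your approach is correct and in fact proves more than the lemma asks: once you establish that the hole regions form exactly one checkerboard colour class, the identity $\sharp B_S+\sharp W_S=h-1$ holds for \emph{every} state, and your Seifert--smoothing count $m=s(D)$, hence $h=c(D)-s(D)+2$, finishes the argument uniformly. The one step that deserves an extra line is why the ``hole colour'' is globally the same at every crossing, not merely that at each crossing the two hole corners are opposite. This follows because the checkerboard colour of a region is the parity of its Alexander index, and at any crossing the black-- and white--hole corners share the same index while the two mixed corners differ from it by $\pm 1$; so hole corners always land in the colour class of one fixed parity.

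The paper takes a different route. It constructs a single specific state: Seifert--smooth all crossings, then reassemble $s(D)-1$ sites to reach a Jordan trail, and read off the state from the trail via two rooted trees. The count $\sharp B_S+\sharp W_S=c(D)-s(D)+1$ falls out because markers at smoothed (orientation--preserving) sites are necessarily black or white holes while markers at reassembled sites are not. To extend from this one state to all states, the paper (immediately after the lemma) invokes the Clock Theorem, noting that clock moves preserve $\sharp B_S+\sharp W_S$. Your checkerboard argument bypasses the Clock Theorem entirely and renders your third paragraph optional --- though the spanning-tree/cotree construction is still a tidy way to exhibit a state explicitly and to confirm that states exist.
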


\begin{proof}
We give an algorithm for constructing such an state, following some ideas in \cite{LibroKauffman}. \\
Starting from $D$, construct its associated Seifert circles by smoothing every crossing in the only way that preserves orientation; the result will be a finite set of disjoint circles. Locate a Seifert circle with empty interior, choose one of its sites and reassemble it so that the resulting configuration has one fewer circle. After repeating this procedure $s(D) - 1$ times using each site at most once, we obtain a Jordan trail $T$.

\begin{figure}
\centering
\includegraphics[width = 16cm]{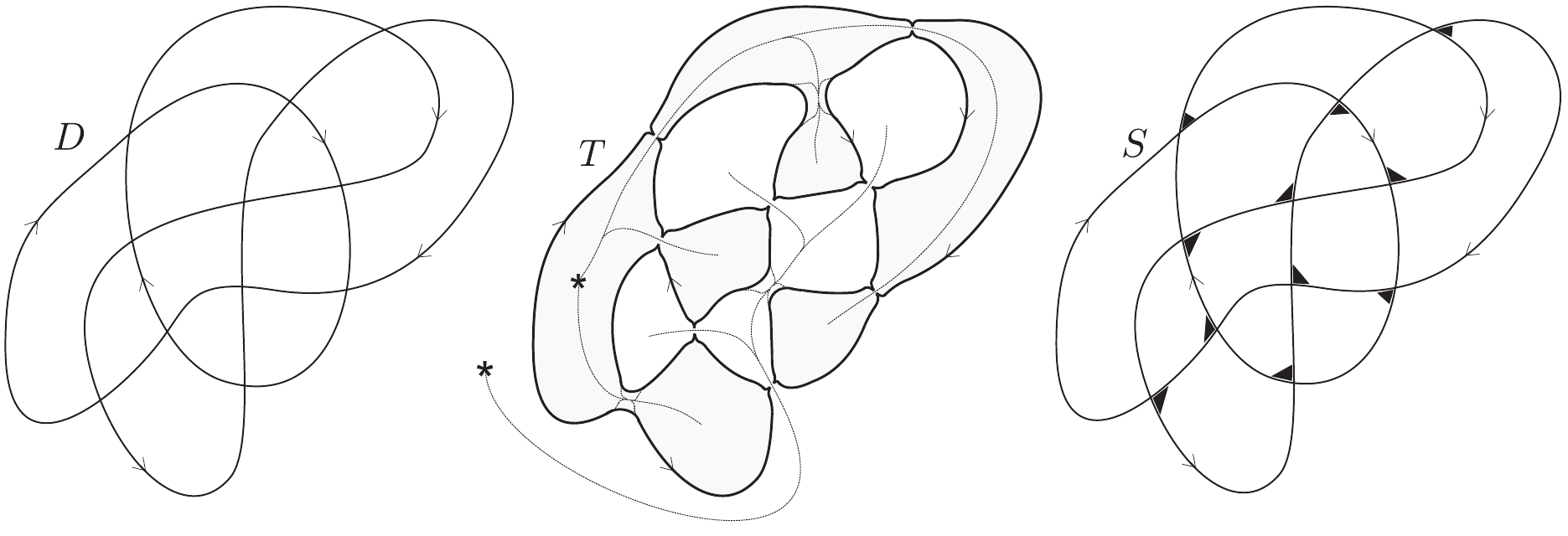}
\caption{\small{The figure shows the trail $T$ associated to the planar diagram $D$ together with a choice of the starred regions and the two rooted trees. It also shows the associated Kauffman state $S$.}}
 \label{DiagramtoTrailtoState}
\end{figure}

\vspace{0.1cm}

Kauffman proved that there exists a bijection between the collection of states in $D$ sharing a fixed choice of adjacent stars and the collection on all Jordan trails arising from $D$. Choose two adjacent regions in $D$ and draw an star in each of them; let $S$ be the Kauffman state associated to $T$ constructed in the following way: construct two trees, each one rooted at one of the stars, in such a way that their branches visit each region once (see Figure \ref{DiagramtoTrailtoState}). In each vertex draw a marker in the place where the branch leaves the site.

\vspace{0.1cm}

Let's see that $S$ verifies the condition in the statement. The markers coming from a re-assembling are neither black nor white holes. Each marker coming from a smoothing preserving orientation is either a black or a white hole. As we did $s(D) - 1$ re-assemblings, it follows that $\sharp(B_S) + \sharp(W_S) = c(D) - s(D) + 1$.

\end{proof}

The collection of possible states arising from a given planar diagram with a fixed choice of adjacent unmarked regions is a lattice, where all the states are related by a finite sequence of clock moves (see \cite{LibroKauffman} for a proof). As clock moves preserve the number of black plus white holes, every state $S$ arising from a diagram $D$ satisfies $$\sharp B_S + \sharp W_S = c(D) - s(D) + 1.$$
Note that $c(D) - s(D) + 1$ is equal to
$2g + \mu(D) -1$, where $g$ is the genus of the surface constructed by
Seifert's algorithm on $D.$\\

\vspace{0.1cm}

This completes the proof of Proposition \ref{PropParidad}.
\end{proof}

\vspace{0.2cm}

\begin{proof}[Proof of Theorem \ref{ourTheorem}:] \mbox{ }\\

Let $\overline{L}$ denote the mirror image of $L$. Since $\nabla_L(t)$ satisfies the skein relation in Theorem~\ref{TeoKauff}, after substituting $z = t^{\frac{1}{2}} - t^{-\frac{1}{2}}$ and using an inductive argument, it follows

$$\nabla_{L}(z) = \nabla_{\overline{L}}(-z) = (-1)^{\mu(L)+ 1} \nabla_{\overline{L}}(z).$$

\vspace{0.1cm}

Note that the labels used in $[\cdot | \cdot]$ for positive (negative) crossings are equal to those used in $<\cdot | \cdot>$ for negative (positive) crossings. As a consequence

$$\nabla_{\overline{L}}(z) \, =  \sum_{S\in\mathcal{S}} (-1)^{\sharp B_S} <\overline{L} | S> \, = \, \sum_{S\in\mathcal{S}} (-1)^{\sharp B_S} [L | S].$$

\vspace{0.1cm}

Finally, Proposition \ref{PropParidad} relates the number of black and white holes in a given state $S$ in the following way:
$$(-1)^{\sharp W_S} = (-1)^{\mu(L) + 1} (-1)^{\sharp B_S}.$$

\vspace{0.1cm}

The result holds after combining these facts:

$$\sum_{S\in\mathcal{S}} (-1)^{\sharp B_S} <L | S> \, = \, \nabla_{L}(t) \, = \, (-1)^{\mu(L)+ 1} \nabla_{\overline{L}}(t) \, = $$
$$ = (-1)^{\mu(L)+ 1} \sum_{S\in\mathcal{S}} (-1)^{\sharp B_S} [L | S] \, = \, \sum_{S\in\mathcal{S}} (-1)^{\mu(L)+ 1} (-1)^{\sharp B_S} [L | S] = $$
$$ = \sum_{S\in\mathcal{S}} (-1)^{\sharp W_S} [L | S] = \Theta_L(t).$$

\end{proof}

\vspace{0.7cm}

\noindent {\bf Remark.} Our parity result and the corresponding identities about the state sums applies to both knots and links. To our knowledge, the formulation of Knot Floer Homology using the FKT states is published only for knots. Presumably our result for links will match a corresponding formulation of Link Floer Homology using the FKT states.
This is a problem for further research.
\bigbreak



\bibliographystyle{plain}
\bibliography{Bibliograf}

\end{document}